\documentclass[12pt]{amsart}
\usepackage{amsmath,amsthm,amsfonts,amssymb,latexsym,enumerate,url,hyperref}

\begin{document}

\theoremstyle{plain}

\newtheorem{thm}{Theorem}[section]
\newtheorem{lem}[thm]{Lemma}
\newtheorem{conj}[thm]{Conjecture}
\newtheorem{pro}[thm]{Proposition}
\newtheorem{cor}[thm]{Corollary}
\newtheorem{que}[thm]{Question}
\newtheorem{rem}[thm]{Remark}
\newtheorem{defi}[thm]{Definition}
\newtheorem{hyp}[thm]{Hypothesis}

\newtheorem*{thmA}{THEOREM A}
\newtheorem*{corB}{COROLLARY B}

\newtheorem*{thmC}{THEOREM C}
\newtheorem*{conjA}{CONJECTURE A}
\newtheorem*{conjB}{CONJECTURE B}
\newtheorem*{conjC}{CONJECTURE C}

\newtheorem*{thmAcl}{Main Theorem$^{*}$}
\newtheorem*{thmBcl}{Theorem B$^{*}$}

\numberwithin{equation}{section}

\newcommand{\Maxn}{\operatorname{Max_{\textbf{N}}}}
\newcommand{\Syl}[2]{{\rm Syl}_{#1}({#2})}
\newcommand{\dl}{\operatorname{\mathfrak{d}}}
\newcommand{\Con}{\operatorname{Con}}
\newcommand{\cl}{\operatorname{cl}}
\newcommand{\Stab}{\operatorname{Stab}}
\newcommand{\Aut}{\operatorname{Aut}}
\newcommand{\Ker}{\operatorname{Ker}}
\newcommand{\IBr}{\operatorname{IBr}}
\newcommand{\card}[1]{|#1|}
\newcommand{\Irr}[1]{{\rm Irr}({#1})}
\newcommand{\SL}{\operatorname{SL}}
\newcommand{\FF}{\mathbb{F}}
\newcommand{\NN}{\mathbb{N}}
\newcommand{\C}[2]{\textbf{C}_{#1}({#2})}
\newcommand{\N}[2]{\textbf{N}_{#1}({#2})}
\newcommand{\OO}{\mathbf{O}}
\newcommand{\F}{\mathbf{F}}
\newcommand{\aplii}[4]{\begin{array}{ccc}
	#1 &\longrightarrow & #2\\
	#3 &\longmapsto & #4
	\end{array}}

\renewcommand{\labelenumi}{\upshape (\roman{enumi})}

\newcommand{\GL}{\operatorname{GL}}
\newcommand{\Sp}{\operatorname{Sp}}
\newcommand{\PGL}{\operatorname{PGL}}
\newcommand{\PSL}{\operatorname{PSL}}
\newcommand{\SU}{\operatorname{SU}}
\newcommand{\PSU}{\operatorname{PSU}}
\newcommand{\PSp}{\operatorname{PSp}}

\newcommand{\Hall}[2]{{\rm Hall}_{#1}({#2})}

\providecommand{\V}{\mathrm{V}}
\providecommand{\E}{\mathrm{E}}
\providecommand{\ir}{\mathrm{Irr_{rv}}}
\providecommand{\Irrr}{\mathrm{Irr_{rv}}}
\providecommand{\re}{\mathrm{Re}}

\def\irrp#1{{\rm Irr}_{p'}(#1)}
\def\Irrs#1{{\rm Irr}_{\sigma'}(#1)}
\def\ibrp#1{{\rm IBr}_{p'}(#1)}
\def\lin#1{{\rm Lin}(#1)}

\newcommand{\Z}[1]{\textbf{Z}({#1})}
\def\Q{{\mathbb Q}}
\def\irr#1{{\rm Irr}(#1)}
\def\ibr#1{{\rm IBr}(#1)}
\def\irra#1{{\rm Irr}_{\rm A}(#1)}
\def\ibra#1{{\rm IBr}_{\rm A}(#1)}
\def \c#1{{\cal #1}}
\def\cent#1#2{{\bf C}_{#1}(#2)}
\def\nor{\trianglelefteq\,}
\def\oh#1#2{{\bf O}_{#1}(#2)}
\def\Oh#1#2{{\bf O}^{#1}(#2)}
\def\zent#1{{\bf Z}(#1)}
\def\det#1{{\rm det}(#1)}
\def\ker#1{{\rm ker}(#1)}
\def\norm#1#2{{\bf N}_{#1}(#2)}
\def\alt#1{{\rm Alt}(#1)}
\def\iitem#1{\goodbreak\par\noindent{\bf #1}}
   \def \mod#1{\, {\rm mod} \, #1 \, }
\def\sbs{\subseteq}

\def\gc{{\bf GC}}
\def\ngc{{non-{\bf GC}}}
\def\ngcs{{non-{\bf GC}$^*$}}
\newcommand{\notd}{{\!\not{|}}}
\newcommand{\Out}{{\mathrm {Out}}}
\newcommand{\Mult}{{\mathrm {Mult}}}
\newcommand{\Inn}{{\mathrm {Inn}}}
\newcommand{\IBR}{{\mathrm {IBr}}}
\newcommand{\IBRL}{{\mathrm {IBr}}_{\ell}}
\newcommand{\IBRP}{{\mathrm {IBr}}_{p}}
\newcommand{\ord}{{\mathrm {ord}}}
\def\id{\mathop{\mathrm{ id}}\nolimits}
\renewcommand{\Im}{{\mathrm {Im}}}
\newcommand{\Ind}{{\mathrm {Ind}}}
\newcommand{\diag}{{\mathrm {diag}}}
\newcommand{\soc}{{\mathrm {soc}}}
\newcommand{\End}{{\mathrm {End}}}
\newcommand{\sol}{{\mathrm {sol}}}
\newcommand{\Hom}{{\mathrm {Hom}}}
\newcommand{\Mor}{{\mathrm {Mor}}}
\newcommand{\St}{{\sf {St}}}
\def\rank{\mathop{\mathrm{ rank}}\nolimits}
\newcommand{\Tr}{{\mathrm {Tr}}}
\newcommand{\tr}{{\mathrm {tr}}}
\newcommand{\Gal}{{\it Gal}}
\newcommand{\Spec}{{\mathrm {Spec}}}
\newcommand{\ad}{{\mathrm {ad}}}
\newcommand{\Sym}{{\mathrm {Sym}}}
\newcommand{\Char}{{\mathrm {char}}}
\newcommand{\pr}{{\mathrm {pr}}}
\newcommand{\rad}{{\mathrm {rad}}}
\newcommand{\abel}{{\mathrm {abel}}}
\newcommand{\codim}{{\mathrm {codim}}}
\newcommand{\ind}{{\mathrm {ind}}}
\newcommand{\Res}{{\mathrm {Res}}}
\newcommand{\Ann}{{\mathrm {Ann}}}
\newcommand{\Ext}{{\mathrm {Ext}}}
\newcommand{\Alt}{{\mathrm {Alt}}}
\newcommand{\AAA}{{\sf A}}
\newcommand{\SSS}{{\sf S}}
\newcommand{\CC}{{\mathbb C}}
\newcommand{\CB}{{\mathbf C}}
\newcommand{\RR}{{\mathbb R}}
\newcommand{\QQ}{{\mathbb Q}}
\newcommand{\ZZ}{{\mathbb Z}}
\newcommand{\KK}{{\mathbb K}}
\newcommand{\NB}{{\mathbf N}}
\newcommand{\ZB}{{\mathbf Z}}
\newcommand{\OB}{{\mathbf O}}
\newcommand{\EE}{{\mathbb E}}
\newcommand{\PP}{{\mathbb P}}
\newcommand{\GC}{{\mathcal G}}
\newcommand{\HC}{{\mathcal H}}
\newcommand{\AC}{{\mathcal A}}
\newcommand{\BC}{{\mathcal B}}
\newcommand{\GA}{{\mathfrak G}}
\newcommand{\SC}{{\mathcal S}}
\newcommand{\TC}{{\mathcal T}}
\newcommand{\DC}{{\mathcal D}}
\newcommand{\LC}{{\mathcal L}}
\newcommand{\RC}{{\mathcal R}}
\newcommand{\CL}{{\mathcal C}}
\newcommand{\EC}{{\mathcal E}}
\newcommand{\GCD}{\GC^{*}}
\newcommand{\TCD}{\TC^{*}}
\newcommand{\FD}{F^{*}}
\newcommand{\GD}{G^{*}}
\newcommand{\HD}{H^{*}}
\newcommand{\hG}{\hat{G}}
\newcommand{\hP}{\hat{P}}
\newcommand{\hQ}{\hat{Q}}
\newcommand{\hR}{\hat{R}}
\newcommand{\GCF}{\GC^{F}}
\newcommand{\TCF}{\TC^{F}}
\newcommand{\PCF}{\PC^{F}}
\newcommand{\GCDF}{(\GC^{*})^{F^{*}}}
\newcommand{\RGTT}{R^{\GC}_{\TC}(\theta)}
\newcommand{\RGTA}{R^{\GC}_{\TC}(1)}
\newcommand{\Om}{\Omega}
\newcommand{\eps}{\epsilon}
\newcommand{\varep}{\varepsilon}
\newcommand{\al}{\alpha}
\newcommand{\chis}{\chi_{s}}
\newcommand{\sigmad}{\sigma^{*}}
\newcommand{\PA}{\boldsymbol{\alpha}}
\newcommand{\gam}{\gamma}
\newcommand{\lam}{\lambda}
\newcommand{\la}{\langle}
\newcommand{\ra}{\rangle}
\newcommand{\hs}{\hat{s}}
\newcommand{\htt}{\hat{t}}
\newcommand{\sgn}{\mathsf{sgn}}
\newcommand{\SR}{^*R}
\newcommand{\tn}{\hspace{0.5mm}^{t}\hspace*{-0.2mm}}
\newcommand{\ta}{\hspace{0.5mm}^{2}\hspace*{-0.2mm}}
\newcommand{\tb}{\hspace{0.5mm}^{3}\hspace*{-0.2mm}}
\def\skipa{\vspace{-1.5mm} & \vspace{-1.5mm} & \vspace{-1.5mm}\\}
\newcommand{\tw}[1]{{}^#1\!}
\renewcommand{\mod}{\bmod \,}

\newcommand{\Irre}[1]{{\rm Irr}(#1)}
\newcommand{\Irra}[2]{{\rm Irr}_{#1}(#2)}
\def\st{\text{ }|\text{ }}
\newcommand{\inv}[1]{{#1}^{-1}}

\marginparsep-0.5cm

\renewcommand{\thefootnote}{\fnsymbol{footnote}}
\footnotesep6.5pt

\title{A $\sigma$-McKay theorem for $\pi$-separable groups}
\author{David Cabrera-Berenguer}
\address{Departament de Matem\`atiques, Universitat de Val\`encia, 46100 Burjassot,
Val\`encia, Spain}
\email{david.cabrera@uv.es}

\thanks{This research is supported by Grant PID2022-137612NB-I00
 funded by MCIN/AEI/ 10.13039/501100011033 and ERDF ``A way of making Europe”. This work is part of the author's PhD thesis under the supervision of 
Gabriel Navarro. The author would like to thank Gabriel Navarro, Asier Arranz, and J. Miquel Martínez for helpful 
conversations on the subject.
}

\keywords{}

\subjclass[2010]{Primary 20C15; Secondary 20D20}

\begin{abstract}
We prove a Hall $\sigma$-subgroup analogue of the McKay conjecture for $\pi$-separable groups proposed by G. 
Navarro. This result simultaneously generalizes the classical McKay conjecture and its $\pi$-separable version. More precisely, if $G$ is $\pi$-separable, $p$ is a prime and $\sigma=\pi\cup\{p\}$, then it is known that there exists $H$ a Hall $\sigma$-subgroup of $G$. In this case, we prove that $|{\rm Irr}_{\sigma'}(G)|=|{\rm Irr}_{\sigma'}(\mathbf{N}_{G}(H))|$.
 \end{abstract}

\maketitle

\section{Introduction} 
The McKay conjecture (now a theorem, see \cite{CS26}) establishes that there exists a bijection
\[f:\Irra{p'}G\to\Irra{p'}{\N GP},\]
where $G$ is a finite group, $p$ is a prime, $P\in\Syl p G$ and $\Irra{p'}G$ is the set of irreducible complex characters of $G$ whose degree is not divisible by $p$.

It is a longstanding general idea in group theory to replace a prime $p$ by a set of primes $\pi$ in various parts of the field. If $G$ is a finite group, our focus here is on the set
\[\Irra{\pi'}G=\bigcap_{p \in \pi}\Irra{p'}G.\]

The first obvious obstacle to doing this is that, in general, Hall $\pi$-subgroups need not exist outside the class of $\pi$-separable groups. The $\pi$-version of the McKay conjecture, however, holds for $\pi$-separable groups by work of T. R. Wolf (see \cite{W78a}). That is, if $\pi$ is a set of primes and $H$ is a Hall $\pi$-subgroup of a $\pi$-separable group $G$, then
\[\card{\Irra{\pi'}G}=\card{\Irra{\pi'}{\N G H}}.\]

Many finite non-$\pi$-separable groups possess Hall $\pi$-subgroups. However, almost as often as such subgroups exist, the $\pi$-version of the McKay conjecture turns out to be false for these groups. For instance, let $G=J_4$ and $\pi=\{5,7\}$. Then $G$ has a unique conjugacy class of abelian Hall $\pi$-subgroups $H$, but the number of irreducible characters of $G$ of $\pi'$-degree does not coincide with the corresponding number for $\N G H$. Hence, the idea that the $\pi$-version of the McKay conjecture might hold for finite groups with a nilpotent (or even cyclic) Hall $\pi$-subgroup $H$ must be discarded.

Can the McKay conjecture and Wolf's theorem be incorporated into a single unified statement? G. Navarro has proposed an answer to this question, and the aim of this paper is to prove it.

\begin{thmA}
Let $G$ be a finite $\pi$-separable group, let $p$ be a prime and let \mbox{$\sigma=\pi\cup\{p\}$}. Then $G$ has a unique conjugacy class of Hall $\sigma$-subgroups $H$, and
    \[\card{\Irra{\sigma'}G}=\card{\Irra{\sigma'}{\N G H}}.\]    
\end{thmA}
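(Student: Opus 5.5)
Existence and conjugacy of Hall $\sigma$-subgroups come first. If $p\in\pi$ there is nothing new. Otherwise, take a $\pi$-series of $G$ whose factors are $\pi$-groups or $\pi'$-groups. The $\pi'$-factors need not be $p'$-groups, but a group that is $\pi$-separable is also $\sigma$-separable, since a $\pi'$-factor is a $\sigma$-group or can be refined further. That is the wrong direction, so instead I would argue directly by induction on $|G|$ using a minimal normal subgroup $N$. If $N$ is a $\pi$-group, $H$ exists by lifting from $G/N$ and conjugacy follows. If $N$ is a $\pi'$-group, then $N$ has Sylow $p$-subgroups $P$ and the Frattini argument gives $G=N\N GP$. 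We apply induction to $\N GP$ when it is proper, and Schur--Zassenhaus-type reasoning (via the $\pi$-part acting coprimely) gives conjugacy. This is routine, so I would keep it short.

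For the counting statement, my plan is to reduce to known bijections. Let $K=\OO^{\pi}$-type data be replaced by the following. Take $N=\OO_{\pi'}(G)$ and work through a Hall $\pi$-subgroup $Q$ of $H$, so that $Q$ is a Hall $\pi$-subgroup of $G$. By Wolf's theorem, there is a natural bijection between $\Irra{\pi'}G$ and $\Irra{\pi'}{\N GQ}$ (Isaacs' canonical $\pi$-McKay correspondence for $\pi$-separable groups, via the $B_\pi$ / Glauberman--Isaacs machinery). This correspondence controls degrees up to $\pi$-parts; more precisely, $\chi(1)/\chi^*(1)$ should be a divisor of $|G:\N GQ|$, which is a $\pi'$-number. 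I would need it to preserve $p$-divisibility, so the first key step is to choose a \emph{canonical} bijection (Isaacs' via $\pi$-special characters and Navarro's self-normalizing-type arguments). Such a bijection is compatible with the action of $\N GH$ and with Galois action, and satisfies $\chi(1)_p=\chi^*(1)_p$, or at least preserves $p'$-degree. Then $\Irra{\sigma'}G$ corresponds to $\Irra{\sigma'}{\N GQ}$.

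Next I replace $G$ by $M=\N GQ$, with $H\cap M$ containing a Hall $\sigma$-subgroup. Conjugacy gives $\N GH\le\N G Q^h$; after conjugating, $\N GH\le M$ and $\N GH=\N MH$. In $M$, $Q$ is normal, and $M/Q$ is a $\pi'$-group, so $p$-McKay in $M/Q$-relative form is what is needed: characters of $M$ of $\sigma'$-degree lie over $M$-invariant... Here I would use Clifford theory over $Q$. A $\pi'$-degree character of $M$ lies over a linear character $\lambda$ of $Q$, with stabilizer $M_\lambda$, and extends (coprime case, Gallagher/Glauberman). This reduces the count to $p'$-degree characters of the $\pi'$-groups $M_\lambda/Q$ (projective ones, trivialized by coprimality). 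Then apply the McKay theorem (\cite{CS26}, or the earlier solvable-type proof for $p$-solvable groups, since $M_\lambda/Q$ is $p$-solvable only if...) with Sylow $p$-subgroup $HQ/Q$-part, compatibly with the action of $\N MH$ on $\lambda$'s, i.e. an equivariant bijection. Matching orbits of $\lambda$ under $M$ and under $\N MH$ (Glauberman correspondence for $\lambda$'s fixed by $P$) finishes the count.

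The main obstacle is equivariance. Both reductions need bijections equivariant under the relevant normalizers, and the McKay step in $M_\lambda/Q$ needs to be $\N{}H$-equivariant and compatible with the extension cocycles. I would first try to invoke the inductive McKay condition, which is now established, to obtain such equivariant bijections. The remaining checks are bookkeeping in character triples.
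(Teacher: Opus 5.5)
\textbf{There is a genuine gap: the intermediate claim in your first counting step is false.}

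The problem is not a missing canonical or equivariant bijection. You want a bijection $\Irra{\pi'}{G}\to\Irra{\pi'}{\N{G}{Q}}$, with $Q$ a Hall $\pi$-subgroup, that preserves $p'$-degrees. That would give $\card{\Irra{\sigma'}{G}}=\card{\Irra{\sigma'}{\N{G}{Q}}}$, which fails already for $G=S_4$, $\pi=\{2\}$, $p=3$:
\begin{enumerate}
\item[] $Q\cong D_8$ is self-normalizing in $S_4$.
\item[] $\Irra{\sigma'}{S_4}$ consists only of the two linear characters.
\item[] All four linear characters of $D_8$ have $\sigma'$-degree, so the counts are $2\neq 4$.
\end{enumerate}
Theorem A itself is trivial in this example, since $H=G$. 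The two degree-$3$ characters of $S_4$ have $\pi'$-degree and are matched by Wolf's count with linear characters of $D_8$. In general $\card{G:\N{G}{Q}}$ is a $\pi'$-number that may be divisible by $p$, so no $\pi$-McKay bijection can control $p$-parts of degrees. Note also that Wolf's theorem is only a counting result; an Isaacs-type canonical bijection is available only in odd order.

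\textbf{Your second reduction fails too.} $\N{G}{H}$ need not normalize any Hall $\pi$-subgroup of $G$; in the same example $\N{G}{H}=G$, which normalizes none. The Frattini argument only gives $\N{G}{H}=H\,\N{\N{G}{H}}{Q}$ for $Q$ a Hall $\pi$-subgroup of $H$, and $H$ itself permutes its Hall $\pi$-subgroups. So you cannot arrange $\N{G}{H}=\N{M}{H}$ with $M=\N{G}{Q}$. Your treatment of a group with a normal Hall $\pi$-subgroup (Gallagher over linear $\lambda$, then McKay for $M_\lambda/Q$) is essentially sound, but it is being applied to the wrong group.

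\textbf{The missing idea.} $\pi$ and $p$ cannot be handled in two separate global stages; they must be handled together, one chief factor at a time. The paper proves a relative statement by induction on $\card{G:Z}$: for $Z\unlhd G$ and $H$-invariant $\lambda\in\Irra{\sigma'}{Z}$,
\[
\card{\Irra{\sigma'}{G|\lambda}}=\card{\Irra{\sigma'}{\N{G}{H}Z|\lambda}}.
\]
The argument runs as follows.
\begin{enumerate}
\item[] Character triple isomorphisms, together with the fact that $\lambda_\sigma$ extends to $G$, reduce to $Z$ central and a $\sigma'$-group.
\item[] Take a chief factor $L/Z$. The $H$-invariant constituents of $\chi_L$ form a single $\N{G}{H}$-orbit, and with induction this reduces to $G=L\N{G}{H}$.
\item[] If $L/Z$ is a $\sigma$-group, then $H\unlhd G$.
\item[] If $L/Z$ is a $\sigma'$-group, the Glauberman--Isaacs correspondence plus an Okuyama--Wajima-type count built on Wolf's extension theorem finishes.
\item[] If $L/Z$ is a nonabelian $\pi'$-group of order divisible by $p$, Rossi's character-triple bijections (valid for all groups now that McKay is proved) pass to $M\N{G}{Q}$, where $Q=H\cap L\in\Syl{p}{L}$. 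This subgroup contains $\N{G}{H}$, and it is proper, since otherwise $M=L$.
\end{enumerate}
The McKay input thus enters only locally, at nonabelian chief factors, and that is exactly what avoids the obstruction above.
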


The existence of $H$ is a theorem of M. Suzuki (see Theorem (3.13) of Chapter 5 of \cite{Su86}). Notice that if $\pi$ is empty, then we recover the classical McKay conjecture, and if $p$ does not divide $\card G$, then we obtain the $\pi$-version of the McKay conjecture for $\pi$-separable groups.

The proof of Theorem A relies on the deep consequences that have emerged from the proof of the McKay conjecture. In particular, we shall use the results of \cite{R23} to obtain Lemma \eqref{rossi}, which is crucial for the final part of the proof.
\section{Proofs}
Our notation for characters follows \cite{Is06} and \cite{N18}. 
If $G$ is $\pi$-separable, it is well-known that $G$ satisfies the property of dominance of the Hall $\pi$-subgroups. If $p$ is a prime, then by the following theorem we also obtain dominance of the Hall $\sigma$-subgroups of $G$, where $\sigma=\pi\cup\{p\}$.
\begin{thm}\label{su}
    Let $G$ be a $\pi$-separable group, let $p$ be a prime and let $\sigma=\pi\cup\{p\}$. Then $G$ has a Hall $\sigma$-subgroup $H$ and every $\sigma$-subgroup of $G$ is contained in a $G$-conjugate of $H$.
\end{thm}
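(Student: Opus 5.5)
Since Theorem \ref{su} is Suzuki's theorem, I would prove it by induction on $|G|$ using a minimal normal subgroup, relying on two standard tools: Schur--Zassenhaus (including the conjugacy part, which holds because one of the two orders involved is odd or by Feit--Thompson; for $\pi$-separable groups the relevant factor is solvable, so no heavy machinery is needed) and Sylow's theorem. We may assume $p\notin\pi$, since otherwise $\sigma=\pi$ and the statement is the classical existence and dominance of Hall $\pi$-subgroups in $\pi$-separable groups. Let $N$ be a minimal normal subgroup of $G$. Since $G$ is $\pi$-separable, $N$ is either a $\pi$-group or a $\pi'$-group. The quotient $G/N$ is $\pi$-separable, so by induction it has a Hall $\sigma$-subgroup $K/N$ that dominates all $\sigma$-subgroups of $G/N$.

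\emph{Case 1: $N$ is a $\pi$-group.} Then $N$ is a $\sigma$-group, and $K$ is a Hall $\sigma$-subgroup of $G$. For dominance, let $S$ be a $\sigma$-subgroup of $G$. Then $SN/N$ is a $\sigma$-subgroup of $G/N$, so $SN\le K^g$ for some $g\in G$, and hence $S\le K^g$.

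\emph{Case 2: $N$ is a $\pi'$-group.} If $p\nmid |N|$, then $N$ is a $\sigma'$-group. In this case $N$ is a normal Hall subgroup of $K$, so Schur--Zassenhaus gives a complement $H$ in $K$, and $H$ is a Hall $\sigma$-subgroup of $G$. For dominance, take a $\sigma$-subgroup $S$. By induction there is $g$ with $SN\le K^g$. Then $S$ and $H^g$ are both complements of $N$ in $SN=(H^g\cap SN)N$; more precisely, $H^g\cap SN$ is a complement to $N$ in $SN$. By the conjugacy part of Schur--Zassenhaus, $S$ is conjugate to $H^g\cap SN$, so $S$ lies in a conjugate of $H$. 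The conjugacy statement applies here: $N$ is either solvable or $SN/N\cong S$ is a $\sigma$-group with $S$ a $\pi$-separable complement.

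If instead $p\mid |N|$, then $N$ is characteristically simple and is a $\pi'$-group whose order is divisible by $p$. The hard part is that $N$ may be a nonabelian $\pi'$-group, so its $p$-part is not normal. To handle this, I would apply the Frattini argument. Let $P\in\Syl pN$; then $K=N\,\N KP$. Set $M=\N KP$, so that $M/\N NP\cong K/N$ is a $\sigma$-group. Inside $M$, the subgroup $P$ is a normal Sylow $p$-subgroup of $\N NP$, and $\N NP/P$ is a $p'$-group with order coprime to $\pi$, hence a $\sigma'$-group. Then $M/P$ has the normal $\sigma'$-Hall subgroup $\N NP/P$, so Schur--Zassenhaus gives a complement $H/P$. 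This $H$ is a $\sigma$-group, and $|H|_\sigma=|K/N|_\sigma\,|N|_p=|G|_\sigma$. For dominance, let $S$ be a $\sigma$-subgroup. Reduce as before to $SN\le K$. Then $S\cap N$ is a $p$-group, so after conjugating inside $N$ we may take $S\cap N\le P$. Next choose a Sylow $p$-subgroup $P_0$ of $N$ normalized by $S$, which I would obtain by coprime action of the $p'$-part combined with Sylow arguments. Replace $S$ by $S\,P_0$, which lies in $\N{SN}{P_0}$, and conjugate $P_0$ to $P$. Then $S\le M$, and the previous Schur--Zassenhaus conjugacy argument applied in $M/P$ finishes the proof.

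The main obstacle is precisely this last step: producing a Sylow $p$-subgroup of $N$ that is normalized by $S$. I would get it from the Frattini argument in $SN$. Namely, $SN=N\,\N{SN}{P}$, and $\N{SN}P/\N NP\cong S/(S\cap N)$ is a $\sigma$-group. A Hall-type complement argument inside $\N{SN}P$ (once more via Schur--Zassenhaus modulo $P$) then yields a conjugate of $S$ contained in $\N{SN}P$.
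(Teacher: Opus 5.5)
\paragraph{Where the proof breaks.} The gap is in the dominance part of the case $p\mid |N|$ with $N$ nonabelian. Everything there hinges on finding a Sylow $p$-subgroup of $N$ normalized by $S$, and your argument for this is circular.

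Applying Schur--Zassenhaus to $\N{SN}{P}/P$ produces a subgroup $R\le \N{SN}{P}$ that is a Hall $\sigma$-subgroup of $SN$. But the conjugacy half of Schur--Zassenhaus only compares two complements inside one group, and $S$ is not known to lie in $\N{SN}{P}$. The statement ``some conjugate of $S$ lies in $R$'' is precisely $\sigma$-dominance for $SN$.
\begin{itemize}
\item If $SN<G$, you could take it from the inductive hypothesis applied to the $\pi$-separable group $SN$. You don't say this, but it is an easy patch.
\item If $SN=G$, there is nothing smaller to induct on, and you are assuming what you want to prove. This happens whenever $G/N$ is a $\sigma$-group and $S$ covers $G/N$ while $S\cap N$ is not Sylow in $N$.
\end{itemize}

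Your other hint, ``coprime action of the $p'$-part combined with Sylow arguments,'' does not close the gap either. A Hall $\pi$-subgroup $A$ of $S$ normalizes some Sylow $p$-subgroup of $N$, and so does a Sylow $p$-subgroup $B$ of $S$. In general neither $A$ nor $B$ is normal in $S$, so you cannot let one act on the fixed points of the other. Also, a $\sigma$-group acting on a set of $p'$-size (even $\sigma'$-size) need not have a fixed point.

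\paragraph{What is missing.} You need a real proof that if $G=SN$ with $N\unlhd G$ a $\pi'$-group and $S$ a $\sigma$-subgroup, then $S$ normalizes some member of $\Syl{p}{N}$. One way is induction on $|G|$, with the theorem available for smaller groups:
\begin{itemize}
\item If $S\cap N\neq 1$ and $S\cap N\unlhd G$, pass to $G/(S\cap N)$.
\item If $S\cap N\neq 1$ is not normal in $G$, apply induction to $\N{G}{S\cap N}=S\,\N{N}{S\cap N}<G$. This gives an $S$-invariant Sylow $p$-subgroup $T$ of $\N{N}{S\cap N}$, which properly contains $S\cap N$ unless the latter is already Sylow in $N$. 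Replace $S$ by $ST$ and repeat.
\item If $S\cap N=1$ and $X=\OO_\pi(S)\neq 1$, and $X$ centralizes $N$, then $X\unlhd G$ and you pass to $G/X$.
\item If $S\cap N=1$ and $X=\OO_\pi(S)\neq 1$ does not centralize $N$, use Glauberman's lemma: $\C{N}{X}$ is transitive on the $X$-invariant Sylow $p$-subgroups of $N$. Combine this with $\sigma$-dominance in the smaller group $S\,\C{N}{X}$.
\item If $\OO_\pi(S)=1$, then $\OO_p(S)\neq 1$, since $S$ is $p$-solvable. Replace $N$ by the normal $\pi'$-subgroup $\OO_p(S)N$, which meets $S$ nontrivially.
\end{itemize}
None of these ideas appears in your proposal; the paper itself gives no argument and simply cites Suzuki.

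Note also that Glauberman's lemma and the conjugacy part of Schur--Zassenhaus both need Feit--Thompson in this generality. $\pi$-separability does not make $N$ or $S$ solvable, so your remark that ``no heavy machinery is needed'' is not correct.
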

\begin{proof}
    See Theorem (3.13) of Chapter 5 of \cite{Su86}.
\end{proof}
Using now the dominance of the previous theorem, we easily obtain the following key lemma.
\begin{lem}\label{93nav}
    Let $G$ be $\pi$-separable, let $H\in\Hall\sigma G$ and let $L\unlhd  G$. If $\chi\in\Irrs G$, then $\chi_L$ has an $H$-invariant irreducible constituent and any two of them are $\N G H$-conjugate.
\end{lem}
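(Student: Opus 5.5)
We argue as in the classical proof that a $p'$-degree character has a $P$-invariant constituent on normal subgroups, with the dominance of Theorem \ref{su} replacing Sylow's theorem. Let $\theta\in\Irr L$ be an irreducible constituent of $\chi_L$ and let $T=G_\theta$ be its stabilizer. By Clifford's theorem $\chi(1)=|G:T|\,e\,\theta(1)$, so $|G:T|$ divides $\chi(1)$. Since $\chi\in\Irrs G$, the index $|G:T|$ is a $\sigma'$-number.

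Next we show that $T$ contains a Hall $\sigma$-subgroup of $G$. The group $T$ is $\pi$-separable, being a subgroup of $G$, so by Theorem \ref{su} it has a Hall $\sigma$-subgroup $K$. Because $|G:T|$ is a $\sigma'$-number, we get $|K|_\sigma=|T|_\sigma=|G|_\sigma$, so $K\in\Hall\sigma G$. Applying the conjugacy part of Theorem \ref{su} in $G$ to the $\sigma$-subgroup $K$ gives $K\subseteq H^g$ for some $g\in G$, and comparing orders gives $K=H^g$. Then $H\subseteq T^{g^{-1}}$, which is the stabilizer of $\theta^{g^{-1}}$. Hence $\theta^{g^{-1}}$ is an $H$-invariant irreducible constituent of $\chi_L$. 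This proves existence.

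For uniqueness up to $\N GH$-conjugacy, let $\theta_1,\theta_2$ be $H$-invariant constituents of $\chi_L$. By Clifford's theorem $\theta_2=\theta_1^g$ for some $g\in G$. Both $H$ and $H^g$ stabilize $\theta_2$, so they are contained in $T_2=G_{\theta_2}$, and both are Hall $\sigma$-subgroups of $T_2$. Applying the conjugacy part of Theorem \ref{su} inside the $\pi$-separable group $T_2$, there is $t\in T_2$ with $H^{gt}=H$. Thus $gt\in\N GH$, and $\theta_1^{gt}=\theta_2^t=\theta_2$.

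There is no real obstacle once Theorem \ref{su} is available for arbitrary $\pi$-separable groups, in particular for subgroups such as $T$ and $T_2$. The one point needing care is that a Hall $\sigma$-subgroup of $T$ is a Hall $\sigma$-subgroup of $G$. This uses the fact that $|G:T|$ is a $\sigma'$-number, which comes from Clifford's theorem together with the hypothesis $\chi\in\Irrs G$.
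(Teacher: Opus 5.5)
Your proof is correct and follows the same route as the paper. Both use Clifford's theorem to make $|G:G_\theta|$ a $\sigma'$-number, then Theorem \ref{su} to put a conjugate of $H$ inside $G_\theta$. Both then apply the conjugacy of Hall $\sigma$-subgroups inside the $\pi$-separable stabilizer to obtain the $\N GH$-conjugacy; you simply spell out the step (a Hall $\sigma$-subgroup of $G_\theta$ is a Hall $\sigma$-subgroup of $G$) that the paper compresses into a single phrase.
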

\begin{proof}
    Let $\theta\in\irr L$ be a constituent of $\chi_L$. As $\chi(1)$ is a $\sigma'$-number then $|G:G_\theta|$ is a $\sigma'$-number and therefore by the previous theorem we have that $G_\theta$ contains some $G$-conjugate of $H$. As $(G_\theta)^x=G_{\theta^x}$ for each $x\in G$, then necessarily $\chi_L$ contains an $H$-invariant constituent. Now, let $\theta,\theta^x\in\irr L$ be $H$-invariant constituents of $\chi_L$. Then $H\subseteq G_\theta$ and $H\subseteq (G_\theta)^x$. Therefore $H$ and $H^{\inv x}$ are Hall $\sigma$-groups of the $\pi$-separable group $G_\theta$, and by the previous theorem $H^{\inv xc}=H$ for some $c\in G_\theta$. Then $\inv xc\in\N G H$ and $(\theta^x)^{\inv xc}=\theta$, as desired.
\end{proof}
The proof of the $p$-solvable case of the McKay conjecture relies on an argument due to T. Okuyama and M. Wajima. Next we establish the $\pi$-version of this argument. The key step in the proof is an application of the following theorem of T. Wolf.
\begin{thm}\label{wolf}
    Let $K, M\unlhd G$ with $K\leq M$ and $1=(\card K, \card{M/K})=(\card{M/K},\card{G/M})$. Suppose that $H$ is a Hall $\pi$-subgroup of $G$ such that $M=KH$ and $H\cap K=1$. Let $C=\C K H$. Let $\theta\in\Irr K$ be $H$-invariant and let $\theta'\in\Irr C$ be the Glauberman-Isaacs correspondent of $\theta$ under the action of $H$. If $\card K$ is odd or $G/M$ is abelian, then $\theta$ extends to $G$ if and only if $\theta'$ extends to $\N G H$.
\end{thm}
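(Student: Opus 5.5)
This is Wolf's theorem, which the paper cites; a full proof is a substantial piece of character theory. Here is how I would organize an argument.

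The setting is this. $M/K\cong H$ is a $\pi$-group that is coprime to both $|K|$ and $|G:M|$. By the Frattini argument, $G=K\N G H$ and $\N G H\cap K=C$. Put $N=\N G H$, so that $N/C\cong G/K$. I would first reduce to Sylow subgroups of $G/M$. Let $\widehat\theta$ be the canonical extension of $\theta$ to $M=KH$. Since $H$ is coprime to $K$, this is the unique extension of $\theta$ to $KH$ with determinantal order coprime to $|H|$. Correspondingly, $\theta'\times 1_H$ is an extension of $\theta'$ to $C\times H=\N M H$.

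By the standard Gallagher-type criterion, $\theta$ extends to $G$ if and only if it extends to $KQ$ for every Sylow subgroup $Q/K$ of $G/K$. For $q\in\pi$, $\theta$ always extends to the $\pi$-part, via $\widehat\theta$. For $q\notin\pi$, coprimality with $|M/K|$ lets us reduce to $\widehat\theta$ extending to $M$-related pieces. More precisely, I would show that $\theta$ extends to $G$ if and only if $\widehat\theta$ extends to $G$. This holds because any extension $\chi$ of $\theta$ restricts to $M$ as $\widehat\theta\lambda$ with $\lambda\in\mathrm{Lin}(M/K)$ $G$-invariant, and $\lambda$ extends to $G$ by coprimality of $M/K$ and $G/M$. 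The same argument on the $N$ side shows that $\theta'$ extends to $N$ if and only if $\theta'\times 1_H$ extends to $N$.

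Next, the invariance is compatible. The Glauberman--Isaacs correspondence is $\N G H$-equivariant, so $N$ fixes $\theta'$, and $G=KN$ fixes $\theta$. The obstruction to extending $\widehat\theta$ from $M$ to $G$ lives in $H^2(G/M,\mathbb{C}^\times)$. The obstruction for $\theta'\times1_H$ lives in $H^2(N/(C\times H),\mathbb{C}^\times)$, and $N/(C\times H)\cong N/\N M H\cong G/M$.

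The key step is to prove that these two cohomology classes coincide, or at least are simultaneously trivial. This can be checked Sylow by Sylow for $G/M$: take $R/M$ a Sylow $r$-subgroup with $r\notin\pi$. If $r\nmid|K|$, one can reduce to a coprime setting and use uniqueness of canonical extensions. If $r\mid|K|$, one needs an actual character-theoretic identification of the projective representations. The tool I would use is Isaacs' or Dade's character triple isomorphism
\[(G,M,\widehat\theta)\ \sim\ (N,\N M H,\theta'\times1_H),\]
constructed from the fact that $\widehat\theta_{\N M H}$ equals $\epsilon\,(\theta'\times1_H)+\Delta$. Here $\epsilon=\pm1$, and $\Delta$ is a character with no constituents of the form $\theta'\times\mu$ with $\mu\in\Irr H$ linear. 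This is the Glauberman--Isaacs character formula on $K\times H$-cosets.

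The main obstacle is exactly this step: turning the ``$\pm$ plus error'' formula into an equivalence of projective representations. That requires a sign and multiplicity control which fails in general. This is why the hypotheses ``$|K|$ odd'' (where Glauberman's correspondence is via the canonical $\epsilon=\pm1$ with $\Delta$ controlled, and Isaacs' theorem gives a bijection commuting with extension) or ``$G/M$ abelian'' appear. In the abelian case, I would instead argue with linear characters and determinantal orders. Extendibility to $G$ with $G/M$ abelian reduces to cyclic Sylow quotients. For those, extendibility is automatic from invariance, except that the $N$-side and $G$-side must be matched via uniqueness of determinantal-order-controlled extensions. I would try the odd case first via Isaacs' $\pi$-special/Glauberman argument, and handle the abelian case separately.
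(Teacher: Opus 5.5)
The paper gives no argument for this statement. It is quoted from Wolf (Theorem (1.5) of \cite{W90}), so relying on that citation is exactly the paper's route. Read as a self-contained proof, however, your sketch has a genuine gap at the only nontrivial point.

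Your reductions are fine: $G=K\N GH$, $\N GH\cap K=C$, $\N GH\cap M=C\times H$ and $\N GH/(C\times H)\cong G/M$. Also, $\theta$ (resp.\ $\theta'$) extends iff $\widehat\theta$ (resp.\ $\theta'\times 1_H$) does, by coprimely extending the invariant linear character of $M/K$ (resp.\ of $(C\times H)/C\cong H$). There is one slip: only $H$-invariance is given, so you cannot say that $\N GH$ fixes $\theta'$ and $G$ fixes $\theta$. What equivariance and $G=K\N GH$ give is that $\theta$ is $G$-invariant iff $\theta'$ is $\N GH$-invariant, so you may assume both.

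After these reductions, though, you have only restated the theorem. The claim that the obstruction classes of $(G,M,\widehat\theta)$ and $(\N GH,C\times H,\theta'\times 1_H)$ in $\mathrm{H}^2(G/M,\mathbb{C}^\times)$ vanish simultaneously \emph{is} Wolf's theorem, and you leave it as ``the main obstacle''.

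The tools you propose for that step do not close it.

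\emph{The character triple isomorphism.} An isomorphism $(G,M,\widehat\theta)\sim(\N GH,C\times H,\theta'\times 1_H)$ is at least as strong as the conclusion, so invoking it is circular unless you prove it. It also does not follow from a formula for $\widehat\theta_{C\times H}$ alone.

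\emph{The restriction formula.} The formula you quote is false. Take $|H|=p$ prime and write $\theta_C=e\theta'+p\Delta_0$ with $e=\epsilon+pk$. Since $\widehat\theta(ch)=\epsilon\theta'(c)$ for $c\in C$ and $1\neq h\in H$, the $\theta'$-part of $\widehat\theta_{C\times H}$ is $\epsilon(\theta'\times 1_H)+k(\theta'\times\rho_H)$, with $\rho_H$ the regular character. So your $\Delta$ contains every $\theta'\times\mu$ whenever $k\neq 0$, and this happens. For example, let $K$ be extraspecial of order $27$ and exponent $3$, let $H$ have order $2$ acting as $-1$ on $K/\Z K$ and trivially on $\Z K$, and let $\theta(1)=3$. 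Then $\epsilon=-1$ and $k=2$.

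\emph{The abelian case.} Your treatment here is wrong. Sylow subgroups of an abelian $G/M$ need not be cyclic. If they were, both $\widehat\theta$ and $\theta'\times 1_H$ would extend automatically and there would be nothing to prove. For abelian $G/M$ the obstruction is the alternating commutator pairing. So you must check, for $x,y\in\N GH$, that the scalar by which representing matrices of $x$ and $y$ fail to commute over $K$ equals the corresponding scalar over $C$. That comparison needs a real property of the Glauberman--Isaacs correspondence beyond $\N GH$-equivariance, and your proposal does not supply one.
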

\begin{proof}
    See Theorem (1.5) of \cite{W90}.
\end{proof}
\begin{lem}\label{IsaExtension}
    Let $K,E\unlhd G$ with $K\subseteq E$. Suppose that $(\card K,\card{E:K})=1$, and let $K\subseteq B\subseteq G$. If $\theta\in\Irr K$ is $G$-invariant then, $\theta$ extends to $EB$ if and only if $\theta$ extends to $B$.
\end{lem}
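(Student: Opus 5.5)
One direction is immediate: if $\theta$ extends to $EB$, then restricting the extension to $B\subseteq EB$ gives an extension of $\theta$ to $B$. So the plan concentrates on the converse. Assume $\theta$ extends to $B$, and aim to show that $\theta$ extends to $EB$.

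The tool will be the standard coprime extension criterion (Isaacs, Corollary (6.27) and Theorem (8.16), in the form of \cite[Corollary 6.27]{Is06}/Gallagher–Isaacs): a $G$-invariant $\theta\in\Irr K$ extends to a group $X$ with $K\unlhd X$ if and only if, for every prime $q$, $\theta$ extends to $Q$, where $Q/K\in\Syl q{X/K}$. Let $X=EB$; note $K\unlhd X$ and $\theta$ is $X$-invariant. Fix a prime $q$ and $Q/K\in\Syl q{X/K}$. The plan splits according to whether $q$ divides $\card{E:K}$.

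\textbf{Case 1: $q\mid\card{E:K}$.} Then $q\nmid\card K$ by coprimality, so $(\card K,\card{Q:K})=1$. Since $\theta$ is $Q$-invariant, the coprime extension theorem (Isaacs (8.16)/(6.28): an invariant character of a normal Hall subgroup extends, using $\det$ arguments via (8.15)) gives that $\theta$ extends to $Q$.

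\textbf{Case 2: $q\nmid\card{E:K}$.} Then $EB/K$ has the normal subgroup $E/K$ of $q'$-order. Hence a Sylow $q$-subgroup of $X/K$ maps isomorphically into $X/E=EB/E\cong B/(B\cap E)$, and its order equals the $q$-part of $\card{B:B\cap E}$. It divides $\card{B:K}$, so a Sylow $q$-subgroup of $B/K$ is already a Sylow $q$-subgroup of $X/K$, because $\card{X:B}=\card{E:E\cap B}$ divides $\card{E:K}$, which is prime to $q$. Thus we may choose $Q\subseteq B$, and $\theta$ extends to $Q$ by restricting the given extension to $B$. Sylow subgroups are conjugate, and conjugating by $x\in X$ preserves both $K$ and $\theta$, so $\theta$ extends to every Sylow $q$-subgroup over $K$.

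Combining the two cases, $\theta$ extends to $Q$ for every prime $q$, hence to $EB$. I expect no step to be a real obstacle. The only care needed is checking that the Sylow extension criterion applies, which requires $\theta$ to be invariant in $X$; this holds since $\theta$ is $G$-invariant. The index computation in Case 2 is the single point to verify carefully.
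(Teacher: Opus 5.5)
Your proof is correct. The paper gives no argument of its own for this lemma; it simply cites Lemma (6.8) of \cite{Is18}. So the difference is that you supply a self-contained proof.

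Your proof works as follows:
\begin{enumerate}
\item You use the Sylow extension criterion to reduce to $Q/K$ Sylow in $EB/K$, which is legitimate since $\theta$ is $EB$-invariant.
\item Primes $q$ dividing $|E:K|$ cannot divide $|K|$. For these you apply the coprime extension theorem to the normal Hall subgroup $K$ of $Q$.
\item For primes $q\nmid|E:K|$, you use $|EB:B|=|E:E\cap B|$. This divides $|E:K|$, so a Sylow $q$-subgroup of $B/K$ is already Sylow in $EB/K$, and the given extension to $B$ restricts to it.
\end{enumerate}

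The citation buys brevity. Your route buys transparency: it shows that coprimality is used only at primes dividing $|E:K|$, and that only the invariance of $\theta$ in $EB$ matters. It also relies only on tools the paper already uses in the proof of Lemma \ref{extpprime}, namely Theorem (5.10) and Corollary (6.2) of \cite{N18}.

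Two cosmetic points. First, your reference for the Sylow criterion is off: it is Theorem (5.10) of \cite{N18} (Corollary (11.31) of \cite{Is06}), not (6.27). Second, in Case 2 you can drop the discussion of embedding $Q/K$ into $X/E$, since the fact that $|X:B|$ divides $|E:K|$ is all you need.
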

\begin{proof}
    See Lemma (6.8) of \cite{Is18}.
\end{proof}
\begin{lem}\label{okuyama}(Okuyama-Wajima argument)
    Suppose that $H$ is Hall $\pi$-subgroup of $G$, and let $K\unlhd G$ be a $\pi'$-subgroup such that $KH\unlhd G$. Let $N=\N G H$, $C=\C K H$. Let $\theta\in\Irr K$ be $H$-invariant and let $\theta'\in\Irr C$ be the Glauberman-Isaacs correspondent of $\theta$ under the action of $H$ over $K$. Suppose that $C\leq A\leq N$ is such that $A/C$ is abelian. Then $\theta$ extends to $KA$ if and only if $\theta'$ extends to $A$.
\end{lem}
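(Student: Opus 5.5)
We reduce to Theorem \ref{wolf}, applied to the group $KA$ in place of $G$, with Lemma \ref{IsaExtension} handling the passage between $KA$ and smaller subgroups.

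\emph{Setup.} Since $H$ is a Hall $\pi$-subgroup and $K$ is a $\pi'$-group, $H\cap K=1$. Also $KH\unlhd G$, so by the Frattini argument $G=K\N G H=KN$. The group $A$ normalizes $H$ and $K$, hence normalizes $M:=KH$. First we enlarge $A$ so that it contains $H$.

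\emph{Step 1 (adjoining $H$).} We have $C=\C K H$, and $[H,C]=1$. Put $A_1=AH$. It is a subgroup because $A\le N$ normalizes $H$.
\begin{itemize}
\item[(a)] We check that $A_1/C$ is abelian. The factor $A/C$ is abelian by hypothesis, but $HC/C\cong H$ need not be abelian, so this is not automatic. Hence we do not enlarge $A$ directly.
\item[(b)] Instead we use Lemma \ref{IsaExtension} twice. Take $E=M=KH\unlhd G$; then $(\card K,\card{M:K})=(\card K,\card H)=1$. Since $\theta$ is only $H$-invariant, we first reduce to the stabilizer $G_\theta$: extendibility to $KA$ forces $A\le G_\theta$, so we may assume $A\le G_\theta$ and replace $G$ by $G_\theta\cap KN$, which contains $KH$. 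The lemma then gives: $\theta$ extends to $KA$ if and only if $\theta$ extends to $KHA=M A$.
\item[(c)] On the other side, $C\unlhd N$, $HC\unlhd N$ and $(\card C,\card H)=1$. The Glauberman--Isaacs correspondent $\theta'$ is $N_\theta$-invariant, and $N_\theta=N_{\theta'}$ by equivariance. Applying Lemma \ref{IsaExtension} inside $N_{\theta'}$ with $E=CH$ gives: $\theta'$ extends to $A$ if and only if $\theta'$ extends to $AH$.
\end{itemize}

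\emph{Step 2 (applying Wolf).} Set $G_0=KAH$, $M_0=KH$ and $K_0=K$. Then $\N{G_0}H=AH\cdot(\text{something in }C)=AH$, since $\N K H=C\le A$. The quotient $G_0/M_0\cong A/(A\cap KH)$ is a quotient of $A/C$, hence abelian. Coprimality holds: $(\card K,\card{M_0/K})=1$ is clear. For $(\card{M_0/K},\card{G_0/M_0})=1$ we need $G_0/M_0$ to be a $\pi'$-group. This is where the hypothesis that $H$ is a Hall $\pi$-subgroup of $G$ enters: $M_0/K\cong H$ is a Hall $\pi$-subgroup of $G_0/K$, so $G_0/M_0$ is a $\pi'$-group. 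Theorem \ref{wolf}, in its "$G/M$ abelian" case, now says that $\theta$ extends to $G_0$ if and only if $\theta'$ extends to $\N{G_0}H=AH$. Combining this with Step 1 proves the lemma.

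\emph{Main obstacle.} The delicate point is Step 1(b): Lemma \ref{IsaExtension} requires $\theta$ to be invariant in the ambient group. We handle this by working inside $KA H$ after first observing that both conditions force $A\le N_\theta=N_{\theta'}$. If either extension exists, $A$ stabilizes $\theta$, respectively $\theta'$, and the equivariance of the Glauberman--Isaacs correspondence under $N$ gives $A$-invariance of the other character. So we may assume throughout that $A$ fixes $\theta$, and then $\theta$ is invariant in $G_0=KAH$, which is what the lemma needs.
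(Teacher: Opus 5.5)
Your proposal is correct and is essentially the paper's argument. You apply Theorem \ref{wolf} to $KHA$ with $M=KH$, using that $KHA/KH$ is a quotient of $A/C$ and that $\N{KHA}{H}=HA$, and you use Lemma \ref{IsaExtension} twice, with $E=KH$, $B=KA$ on one side and $E=\C{K}{H}H$, $B=A$ on the other; the abandoned detour in Step 1(a) is unnecessary but harmless.
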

\begin{proof}
    We may assume that $G=KHA$. In the notation of Theorem \eqref{wolf} let $M=KH$. Then obviously $1=(\card K,\card{M/K})=(\card{M/K},\card{G/M})$. Since
    \[G/M=KHA/KH=A/HK\cap A\cong(A/C)/(HK\cap A/C)\]
    is abelian, then $\theta$ extends to $KHA$ if and only if $\theta'$ extends to $\N{KHA}H$.

    Suppose that $\theta$ extends to $KA$. Since $\theta$ is $H$-invariant then $\theta$ is $G$-invariant. Applying now Lemma \eqref{IsaExtension} with $E=KH$, $B=KA$ we obtain that $\theta$ extends to $KHA=G$, and since $A\subseteq\N{KHA}H$, we have that $\theta'$ extends to $A$. 
    
    Assume now that $\theta'$ extends to $A$. Then it suffices to prove that $\theta'$ extends to $\N{KHA}H$. As $A\subseteq N$, by elementary group theory it follows that
    \[\N{HKA} H=\N{HK} HA=\C KHHA=HA.\]

    As the Glauberman-Isaacs correspondence commutes with the $H$-action, $\theta'$ is $H$-invariant, and since $\theta'$ extends to $A$, it follows that $\theta'$ is $HA$-invariant. Also, both $\C K H$ and $\C KHH$ are normal in $HA$. Applying again Lemma \eqref{IsaExtension} with $E=\C K HH$, $B=A$ we have that $\theta'$ extends to $\C K HHA=HA$, yielding the result.
\end{proof}
The following corollary is the $\sigma$-version for $\pi$-separable groups of Theorem (8.11) of \cite{N18}. The proof follows the same lines as that of the cited theorem, replacing a prime $p$ by $\sigma$. In particular, whenever Theorem (8.6) of \cite{N18} is used, we use Lemma \eqref{okuyama} instead, for the Frattini argument we use Theorem \eqref{su}, and the properties of the Glauberman correspondence are replaced by the corresponding properties of the Glauberman-Isaacs correspondence, which can be found in \cite{W78b}. Thus, the proof of this corollary is omitted.
\begin{cor}\label{811nav}
    Let $G$ be $\pi$-separable and let $p$ be a prime. Let $\sigma=\pi\cup\{p\}$ and let $H$ be a Hall $\sigma$-subgroup of $G$ (which exists by Theorem \eqref{su}). Suppose $K\unlhd G$ is a $\sigma'$-group such that $KH\unlhd G$. Let $W\unlhd G$ be a $\sigma$-group and let $\lambda\in\Irr W$ be $G$-invariant. If $\theta\in\irr K$ is $H$-invariant and $\theta'\in\Irr {\C K H}$ is its Glauberman-Isaacs correspondent of $\theta$ with respect to $H$, then
    \[\card{\Irra{\sigma'}{G|\theta\times\lambda}}=\card{\Irra{\sigma'}{\N G H|\theta'\times\lambda}}.\]
\end{cor}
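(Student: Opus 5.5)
We follow the proof of Theorem (8.11) of \cite{N18}. Write $N=\N G H$ and $C=\C K H$. First we reduce the counting to $G_{\theta}$. Since $KH\unlhd G$ and $K\cap H=1$, the Frattini argument of Theorem \eqref{su} gives $G=KN$. Because $\theta$ is $H$-invariant, also $G_\theta=KN_\theta$. The Glauberman--Isaacs correspondence commutes with the action of $N$, so $N_\theta=N_{\theta'}$.

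Next we deal with $W$. Since $W$ is a normal $\sigma$-subgroup, $W\subseteq H$, so $[W,K]\subseteq W\cap K=1$. Hence $W$ centralizes $K$, $W\subseteq N$, and $W\cap K=1$. Thus $\theta\times\lambda$ is an irreducible character of $K\times W$, and its stabilizer in $G$ is $G_\theta$ because $\lambda$ is $G$-invariant. By the Clifford correspondence, induction gives bijections
\[\Irra{\sigma'}{G_\theta|\theta\times\lambda}\to\Irra{\sigma'}{G|\theta\times\lambda},\qquad \Irra{\sigma'}{N_{\theta}|\theta'\times\lambda}\to\Irra{\sigma'}{N|\theta'\times\lambda}.\]
For the $\sigma'$-degree condition we use that $|G:G_\theta|=|N:N_\theta|$ divides $|G:H|$, which is a $\sigma'$-number. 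So we may assume $\theta$ and $\theta'$ are $G$- and $N$-invariant, respectively.

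The main step is a character triple argument. We want to show that the triples $(G,K\times W,\theta\times\lambda)$ and $(N,C\times W,\theta'\times\lambda)$ give the same count of $\sigma'$-degree characters. Set $\bar G=G/K$. Then $N/C\cong KN/K=\bar G$, because $N\cap K=C$. The plan is to show that for every subgroup $KA$ with $C\le A\le N$, the character $\theta$ extends to $KA$ if and only if $\theta'$ extends to $A$, and that the corresponding extensions can be matched compatibly.
\begin{itemize}
\item When $A/C$ is abelian, this is exactly Lemma \eqref{okuyama}. That lemma applies with the prime set $\sigma$ in place of $\pi$, since $H$ is a Hall $\sigma$-subgroup and $K$ is a $\sigma'$-group.
\item To pass from abelian sections to the whole group, we follow \cite{N18}. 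The relevant characters have $\sigma'$-degree, and every character of $\bar G/\,\overline{KH}$ involved has $\sigma'$-degree. We therefore use the standard argument via projective representations and cohomology classes. By Gallagher-type theorems, the factor-set classes of $\theta$ in $H^2(G/K,\CC^\times)$ and of $\theta'$ in $H^2(N/C,\CC^\times)$ are determined by their restrictions to abelian (indeed cyclic, or Sylow) subgroups. Those restrictions agree by Lemma \eqref{okuyama}.
\end{itemize}
This yields isomorphic character triples $(G,K,\theta)$ and $(N,C,\theta')$. The isomorphism preserves ratios of degrees, and it respects multiplication by the $G$-invariant $\lambda$, because $W$ centralizes $K$ and sits inside both groups. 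The counts of $\sigma'$-degree characters over $\theta\times\lambda$ and over $\theta'\times\lambda$ then agree, since $\theta(1)$ and $\theta'(1)$ are both $\sigma'$-numbers: they divide $|K|$ and $|C|$.

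We expect the main obstacle to be making the comparison of cohomology classes rigorous for the whole group rather than only for abelian sections. The first approach to try is the one in \cite{N18}: reduce to the case where $\theta$ extends to $G$, by replacing the triple with an isomorphic one whose normal subgroup is central. Then show that $\theta'$ extends to $N$ by checking extendibility on each Sylow $q$-subgroup $Q/C$ of $N/C$, using the criterion that a character extends if it extends to every Sylow subgroup (Corollary (11.31) of \cite{Is06}). For $q\notin\sigma$ this holds by coprimality, and for $q\in\sigma$ it follows from Lemma \eqref{okuyama} applied to cyclic subgroups, through the cyclic-subgroup criterion. Once both characters extend, Gallagher's theorem gives bijections with $\Irr{G/K}$ and $\Irr{N/C}$ that preserve $\sigma'$-degree, and together with $\lambda$ the counts match.
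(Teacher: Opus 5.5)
Your preliminary reductions are fine: $G=KN$ by the Frattini argument, the Clifford reduction to $G$-invariant $\theta$, and $W\subseteq H$ centralizing $K$. The main step, however, is not proved. Nothing you write establishes that $(G,K,\theta)$ and $(N,C,\theta')$ are isomorphic character triples, and the cohomological argument fails for three reasons.
\begin{enumerate}
\item Classes in $H^2(G/K,\mathbb{C}^\times)$ restrict trivially to cyclic subgroups, since every invariant character extends over a cyclic quotient. Restriction to Sylow subgroups is injective on primary components, but the Sylow subgroups of $N/C$ need not be abelian, so Lemma \ref{okuyama} says nothing about them.
\item Lemma \ref{okuyama} only tells you that the two classes \emph{vanish} on the same sections with $A/C$ abelian. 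It does not say the classes are equal. On $C_p\times C_p$ with $p>2$, a nontrivial class and its square are distinct but vanish on exactly the same subgroups. So no triple isomorphism follows.
\item Even if such an isomorphism existed, you would still have to check that it sends $\theta\times\lambda$ to $\theta'\times\lambda$ rather than to some $\theta'\times\mu$.
\end{enumerate}
Your fallback does not work either. Passing to an isomorphic triple with central normal subgroup neither makes $\theta$ extend nor preserves the coprime action that defines $\theta'$. Your Sylow check is reversed: for $q\in\sigma$, extension to $Q$ is automatic because $|Q/C|$ is coprime to $|C|$, whereas for $q\notin\sigma$ the prime $q$ may divide $|C|$. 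Finally, extendibility over cyclic sections carries no information.

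What is actually needed is an equality of counts, not a triple isomorphism, and for that $A/C$ abelian suffices. Here is the outline.
\begin{enumerate}
\item Assume $\theta$ is $G$-invariant. Since $KH/K$ is a $\sigma$-group, $G/KH$ is a $\sigma'$-group and $\theta(1)$ is a $\sigma'$-number, $\mathrm{Irr}_{\sigma'}(G|\theta)$ consists exactly of the characters lying over extensions of $\theta$ to $KH$. Similarly, $\mathrm{Irr}_{\sigma'}(N|\theta')$ consists of the characters over extensions of $\theta'$ to $CH=C\times H$.
\item These extensions are $\hat\theta\beta$ and $\theta'\times\beta$ with $\beta\in\mathrm{Lin}(H)$, where $\hat\theta$ is the canonical extension. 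The matching $\psi=\hat\theta\beta\mapsto\psi'=\theta'\times\beta$ is $N$-equivariant. It respects $\lambda$ because $\hat\theta_{K\times W}=\theta\times 1_W$.
\item Since $G=KN$ and $KH$ acts trivially on $\mathrm{Irr}(KH)$, it remains to show $|\mathrm{Irr}(G_\psi|\psi)|=|\mathrm{Irr}(N_\psi|\psi')|$. Equivalently, $G_\psi/KH\cong N_\psi/CH$ must have the same number of $\psi$-good and $\psi'$-good classes.
\item A class $gKH$ is $\psi$-good iff $\psi$ extends to $KH\langle g,x\rangle$ for all $x$ with $[g,x]\in KH$. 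Take $g,x$ in a complement $U\supseteq C$ of $H$ in $N_\psi$. Then $[g,x]\in U\cap CH=C$, so $A=C\langle g,x\rangle$ has $A/C$ abelian.
\item Lemma \ref{IsaExtension} (with $E=KH$, respectively $E=CH$), together with a coprime correction of the linear twist $\beta$, shows two things: $\psi$ extends to $KHA$ iff $\theta$ extends to $KA$, and $\psi'$ extends to $CHA$ iff $\theta'$ extends to $A$. Lemma \ref{okuyama} then matches the good classes.
\end{enumerate}
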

\begin{lem}\label{extpprime}
    Let $N\unlhd G$ be a $p$-group for some prime $p$. Let $\chi\in\Irra{p'}G$ and let $\theta\in\Irr N$ be under $\chi$. Then $\theta$ extends to $G_\theta$.
\end{lem}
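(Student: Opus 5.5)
Let $P\in\Syl p G$ and consider the inertia group $T=G_\theta$. The plan is to reduce to the case where $\theta$ is $G$-invariant, and then apply the standard extendibility criterion for characters of $p$-groups.

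First, by Clifford's theorem, $\chi(1)=|G:T|\,\psi(1)$, where $\psi\in\Irr T$ is the Clifford correspondent of $\chi$ lying over $\theta$. Since $\chi(1)$ is a $p'$-number, so is $\psi(1)$, and $\psi_N=e\theta$ with $\psi(1)=e\,\theta(1)$. Hence $e$ is also a $p'$-number. Replacing $G$ by $T$ and $\chi$ by $\psi$, we may therefore assume that $\theta$ is $G$-invariant, and we must show that $\theta$ extends to $G$.

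Next, use the criterion (Corollary (6.27) of \cite{Is06}, or its equivalent in \cite{N18}) that a $G$-invariant $\theta$ extends to $G$ if and only if it extends to $Q$ for every Sylow $q$-subgroup $Q/N$ of $G/N$, for every prime $q$. For $q\neq p$ we have $(|N|,|Q:N|)=1$, so $\theta$ extends to $Q$ by Gallagher's coprime extension theorem (Corollary (8.16) of \cite{Is06}). It therefore remains to treat $q=p$, where $Q=PN=P$ for some $P\in\Syl p G$, since $N\le P$.

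For this step, restrict $\chi$ to $P$. As $\chi(1)$ is prime to $p$ and every irreducible character of $P$ has $p$-power degree, $\chi_P$ has a linear constituent... but more usefully, some irreducible constituent $\varphi$ of $\chi_P$ has $p'$-degree, hence $\varphi(1)=1$ is impossible in general unless we argue via degrees: write $\chi_P=\sum a_\varphi\varphi$. Since $\chi(1)=\sum a_\varphi\varphi(1)$ is prime to $p$, some $\varphi$ has $\varphi(1)=1$, i.e.\ $\varphi$ is linear. As $\chi_N$ is a multiple of $\theta$, we get $\varphi_N=\theta$ (it is an irreducible constituent of $\chi_N$, restricted from a linear character). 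In particular $\theta$ is linear and extends to $P$ via $\varphi$. This completes the argument.

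The main obstacle is the case $q=p$, and it is resolved by the degree argument for $\chi_P$: the coprimality of $\chi(1)$ with $p$ forces a linear constituent of $\chi_P$, and this constituent provides the required extension of $\theta$ to $P$.
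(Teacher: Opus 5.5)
Your proof is correct and follows essentially the same route as the paper. You pass to the Clifford correspondent in $G_\theta$ to assume $\theta$ is $G$-invariant, apply the Sylow extension criterion, handle primes $q\neq p$ by coprimality, and handle $q=p$ by noting that $\chi_P$ has a linear constituent, which must restrict to $\theta$. The only blemish is the garbled sentence at the start of the $q=p$ step (about a constituent of $p'$-degree); it should be deleted, since the degree argument that follows it is the correct one.
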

\begin{proof}
Working by induction on $\card G$ and considering the Clifford correspondent $\psi\in\Irra{p'}{G_\theta|\theta}$ of $\chi$, we may assume that $\theta$ is $G$-invariant. By Theorem (5.10) of \cite{N18} it suffices to show that $\theta$ extends to $P$ for each Sylow subgroup $P/N$ of $G/N$.
Let $P\in\Syl pG$. Note that $\chi_P$ has some linear constituent, which necessarily lies over $\theta$, and hence $\theta$ extends to $P$. If $Q\in\Syl q G$ and $q\not=p$, since $(o(\theta)\theta(1),\card{QN:N})=1$, we have that $\theta$ extends to $QN$ (see for instance Corollary (6.2) of \cite{N18}).
\end{proof}
\begin{lem}\label{rossi}
    Let $L\unlhd G$ be a $\pi'$-group such that $L/\Z L$ is a direct product of nonabelian simple groups. Let $p$ be a prime, let $Q\in\Syl p L$ and write $\sigma=\pi\cup\{p\}$. Then there is an $\N G Q$-invariant subgroup $\N L Q\subseteq M\subseteq L$, with $M<L$ whenever $Q$ is not normal in $L$, and an $\N G Q$-equivariant bijection $':\Irra{\sigma'}L\to\Irra{\sigma'}{M}$ such that 
    \begin{enumerate}[a)]
        \item $\card{\Irra{\sigma'}{G_\theta|\theta}}=\card{\Irra{\sigma'}{M\N G Q_\theta|\theta'}}$ for each $\theta\in\Irra{\sigma'}L$.
        \item If $\theta\in\Irra{\sigma'}L$ and $\lambda\in\Irr{\Z L}$, then $\theta$ lies over $\lambda$ if and only is so does $\theta'$.
        \item If $M\N G Q=G$, then $M=L$.
    \end{enumerate}
\end{lem}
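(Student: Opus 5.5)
Since $L$ is a $\pi'$-group, $\sigma'$-degree characters of $L$ are exactly its $p'$-degree characters, so $\Irra{\sigma'}L=\Irra{p'}L$. The plan is to take $M$ and the bijection from the inductive McKay condition results of \cite{R23}, now available since McKay is a theorem \cite{CS26}.

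\textbf{Case $Q\unlhd L$.} Here $L$ is quasisimple-by-product with $L/\Z L$ having no nontrivial normal $p$-subgroup. This forces $Q\le \Z L$, and hence $p\nmid|L/\Z L|$ for each factor. We take $M=L$ and let $'$ be the identity. Then (b) is trivial, (c) is vacuous, and (a) follows from the Frattini argument $G=L\N GQ$.

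\textbf{Case $Q$ not normal in $L$.} We use Rossi's theorem \cite{R23}: if every simple group satisfies the inductive McKay condition (true by \cite{CS26}), then for $L\unlhd G$ and $Q\in\Syl pL$ there exist an $\N GQ$-stable subgroup $M$ with $\N LQ\le M<L$ (proper when $Q$ is not normal) and an $\N GQ$-equivariant bijection $'\colon\Irra{p'}L\to\Irra{p'}M$. This bijection preserves central characters on $\Z L$, which gives (b), and is such that $(G_\theta,L,\theta)$ and $(M\N GQ_{\theta},M,\theta')$ are related by $\ge_c$ (central isomorphism of character triples). We use the central-order relation rather than a mere bijection, because central isomorphism of character triples induces bijections
\[
\Irr{G_\theta|\theta}\to\Irr{M\N GQ_\theta|\theta'},\qquad \chi\mapsto\chi',
\]
compatible with subgroups $J$ satisfying $L\le J\le G_\theta$ and with ratios of degrees: $\chi(1)/\theta(1)=\chi'(1)/\theta'(1)$. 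Since $\theta$ and $\theta'$ both have $\sigma'$-degree, $\chi$ is $\sigma'$ iff $\chi'$ is, giving (a).

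The degree-ratio statement needs care, since $M\N GQ_\theta/M$ is only isomorphic to a subgroup situation. By the Frattini argument $G=L\N GQ$, so $G_\theta=L\N GQ_\theta$ by equivariance ($\N GQ_\theta=\N GQ_{\theta'}$). Then $G_\theta/L\cong \N GQ_\theta/\N LQ$, which surjects compatibly onto $M\N GQ_\theta/M$; the $\ge_c$ relation is formulated exactly for this identification.

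\textbf{Part (c).} If $M\N GQ=G$, then $L=M(L\cap \N GQ)=M\N LQ=M$, contradicting $M<L$ unless $Q\unlhd L$. So (c) holds by the choice made in the first case.

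\textbf{Main obstacle.} The hardest step is extracting from \cite{R23} a version stated for an arbitrary $L\unlhd G$ with $L/\Z L$ a product of nonabelian simple groups, rather than for a simple group, including the non-faithful central characters. I would first check that Rossi's inductive condition is closed under direct products, via the standard wreath-product argument, and under central extensions, and then read off (a) and (b) directly.
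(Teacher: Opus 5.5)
Your proposal is correct and is essentially the paper's proof: you invoke Corollary 3.4 of \cite{R23} (unconditional by \cite{CS26}) and use $\Irra{p'}{L}=\Irra{\sigma'}{L}$, $\Irra{p'}{M}=\Irra{\sigma'}{M}$ for the $\pi'$-group $L$. You then get a) from the degree-ratio-preserving character triple isomorphism furnished by $\geq_c$, b) from the agreement of central characters on $\Z L\subseteq \C{G_\theta}{L}$ forced by $\geq_c$, and c) from the Dedekind computation $L=L\cap M\N GQ=M\N LQ=M$, which is a slightly more direct version of the paper's argument. Your separate case $Q\unlhd L$ and the closure checks in your last paragraph are unnecessary, because the cited corollary already applies to an arbitrary normal subgroup $L$. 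Also, the map $G_\theta/L\to M\N GQ_\theta/M$ you worry about is in fact an isomorphism, since $L\cap M\N GQ_\theta=M$, and this is exactly what the definition of $\geq_c$ builds in.
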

\begin{proof}
    We follow the notation of Chapter 10 of \cite{N18}. By \cite{CS26} it follows that Corollary (3.4) of \cite{R23} holds for any group. By that corollary and using that $L$ is a $\pi'$-group we obtain an $\N G Q$-invariant subgroup $\N L Q\subseteq M\subseteq L$, with $M<L$ whenever $Q$ is not normal in $L$, and an $\N G Q$-equivariant bijection $':\Irra{p'}L\to\Irra{p'}{M}$ with $(G_\theta,L,\theta)\geq_c(MN_G(Q)_\theta, M,\theta')$ for every $\theta\in\Irra{p'}L$. However, notice that since $L$ is a $\pi'$-group, we have $\Irra{p'}L=\Irra{\sigma'}L$ and $\Irra{p'}M=\Irra{\sigma'}M$. Let $\theta\in\Irra{\sigma'}L$. By Theorem (10.13) of \cite{N18} it follows that the character triples $(G_\theta,L,\theta)$ and $(M\N G Q_\theta,M,\theta')$ are isomorphic. Since character triple isomorphisms preserve character degree ratios and both $\theta(1)$ and $\theta'(1)$ are $\sigma'$-numbers, then necessarily $\card{\Irra{\sigma'}{G_\theta|\theta}}=\card{\Irra{\sigma'}{M\N G Q_\theta|\theta'}}$ and a) follows. Now, by the definition of the central order of the character triples, there are projective representations $\mathcal P$ of $G_\theta$ associated with $\theta$, $\mathcal P'$ of $M\N G Q_{\theta}$ associated with $\theta'$ and a map $\mu:\C{G_\theta}L\to\mathbb C^\times$ such that $\mathcal P_{\C{G_\theta}L}=\mu I_{\theta(1)}$ and $\mathcal P'_{\C{G_\theta}L}=\mu I_{\theta'(1)}$. Hence b) follows by restricting to $\Z L$ and taking traces. For the last part, notice that $M\N G Q_\theta=(M\N G Q)_\theta$. Therefore, if $M\N G Q=G$, then by the definition of character triple isomorphism it follows that $G_\theta/L\cong G_\theta/M$, yielding $M=L$.
\end{proof}
Now we are ready to prove the main result of this paper. In order to do so we establish a relative version with respect to a normal subgroup $Z$, using the techniques of Theorem (10.26) of \cite{N18}. We obtain the desired result by letting $Z=1$.
\begin{thm}
    Let $G$ be $\pi$-separable and let $p$ be a prime. Write $\sigma=\pi\cup\{p\}$ and let $H$ be a Hall $\sigma$-subgroup of $G$ (which exists by Lemma \eqref{su}). Let $Z\unlhd G$ and let $\lambda\in\Irra{\sigma'}Z$ be $H$-invariant. If $\Irra{\sigma'}{G|\lambda}$ is nonempty, then
    \[\card{\Irra{\sigma'}{G|\lambda}}=\card{\Irra{\sigma'}{\N G HZ|\lambda}}.\]
\end{thm}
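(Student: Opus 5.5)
We argue by induction on $\card{G:Z}$, and within a fixed value of it by induction on $\card G$, following the scheme of Theorem (10.26) of \cite{N18}. Write $N=\N G H$. Since $\lambda$ is $H$-invariant, $\lambda$ has $\sigma'$-degree, and every $\chi\in\Irrs{G|\lambda}$ has $\sigma'$-degree, Lemma \eqref{93nav} (applied to $Z$) shows that the $G$-orbit of $\lambda$ meets the $H$-invariant characters in a single $N$-orbit. Hence the Clifford correspondence lets us replace $G$ by $G_\lambda$ and $N$ by $\N{G_\lambda}H=N_\lambda$. Here $H\le G_\lambda$, and $|G:G_\lambda|$ is a $\sigma'$-number, so Clifford correspondents keep $\sigma'$-degree. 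If $G_\lambda<G$, induction on $\card G$ finishes. So we assume $\lambda$ is $G$-invariant.

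Next we reduce to the case where $Z$ is central and $\lambda$ is faithful. We would use character triple isomorphisms (Theorem (5.8)/(10.13)-type results of \cite{N18}) to replace $(G,Z,\lambda)$ by an isomorphic triple $(G^*,Z^*,\lambda^*)$ with $Z^*$ central. Such isomorphisms preserve degree ratios, so $\sigma'$-degrees correspond. They are also compatible with the subgroup $NZ$, because $H$ maps to a Hall $\sigma$-subgroup of $G^*/Z^*\cong G/Z$.

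Now take $L/Z$ a chief factor of $G$ contained in the appropriate term of a normal series. Since $G$ is $\pi$-separable, $L/Z$ is a $\pi$-group, a $p$-group, or a $\{p\}\cup\pi$-free group. The last type is either an abelian $\sigma'$-group or a direct product of nonabelian simple $\pi'$-groups. We split into three cases.

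\textbf{Case 1: $L/Z$ is a $\sigma$-group.} Then $L\subseteq HZ$ up to conjugation, so $HZ$ contains the relevant part. We consider $\theta\in\Irr L$ over $\lambda$ with $H$-invariance. Lemma \eqref{93nav} gives an $N$-orbit correspondence, and when $\theta$ is $G$-invariant we use induction with $L$ in place of $Z$; this is valid since $\card{G:L}<\card{G:Z}$. To control extendibility in the $p$-part we use Lemma \eqref{extpprime}. If no proper such $L$ exists, then $G/Z$ is a $\sigma$-group and $G=HZ=NZ$, which is trivial.

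\textbf{Case 2: $L/Z$ is an abelian $\sigma'$-group.} We pass to $K$, a $\sigma'$-complement obtained via Schur--Zassenhaus, so that $KH\unlhd G$ modulo $Z$. We use Corollary \eqref{811nav}, with $W$ the $\sigma$-part and $\lambda$ there taken to be a suitable $G$-invariant character, to transfer counts from $G$ over $\theta$ to $\N G H$ over the Glauberman--Isaacs correspondent $\theta'$. Summing over $N$-orbits of $H$-invariant $\theta$, with the orbits matched by the equivariance of the Glauberman--Isaacs bijection, gives the equality. Then induction is applied in $\N G H L$ if that subgroup is proper.

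\textbf{Case 3: $L/Z$ is a direct product of nonabelian simple $\pi'$-groups.} We first arrange $Z$ central, so that $L/\Z L$ has this form, possibly after replacing $L$ by $L'$-type subgroups. For $Q\in\Syl pL$ with $Q\le H$, we apply Lemma \eqref{rossi}. This gives $M$ with $\N LQ\subseteq M$, and by a Frattini argument using Theorem \eqref{su} we get $G=L\N GQ$. Part a) transfers the counts $\card{\Irrs{G_\theta|\theta}}$ to $M\N GQ_\theta$, and part b) keeps track of lying over $\lambda$. Since $H\subseteq \N G Q$ fails in general, we take $Q=H\cap L$, which is a Sylow $p$-subgroup of $L$ normalized by $H$. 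Then $N\subseteq\N G Q$, so $M\N GQ$ contains $NZ$. If $M\N GQ<G$, induction on $\card G$ applies to $M\N GQ$. If $M\N GQ=G$, part c) gives $M=L$, so $Q\unlhd L$, and since $L$ is a $\pi'$-group this forces a contradiction unless $p\nmid\card L$, in which case $L$ is a $\sigma'$-group handled as in Case 2.

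The main obstacle is this last case: after summing over $G$-orbits of $\theta\in\Irrs{L|\lambda}$ with the help of part a), one must match the set $\Irrs{M\N GQ\,|\,\lambda}$ and its relative count with $\Irrs{NZ|\lambda}$ via induction. That matching requires verifying that the $\N GQ$-equivariance of $'$ turns $G$-orbits into $M\N GQ$-orbits, together with the degree bookkeeping for $\sigma'$-degrees.
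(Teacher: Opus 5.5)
There is a genuine gap. Your outline follows the paper's architecture: Clifford reduction, character triples to make $Z$ central, a chief factor $L/Z$, Glauberman--Isaacs with Corollary~\ref{811nav} for $\sigma'$-factors, and Lemma~\ref{rossi} for nonabelian factors of order divisible by $p$. However, one essential reduction is missing.

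\textbf{Missing reduction: $Z$ must become a central $\sigma'$-group.} You only make $Z$ central with $\lambda$ faithful, so $Z$ may still have a nontrivial $\sigma$-part. In Case 3 you then assert that $L$ is a $\pi'$-group and that $Q=H\cap L$ is a Sylow $p$-subgroup of $L$. Both fail as soon as $Z_\pi\neq 1$, since then $Z_\pi\le H\cap L$. Lemma~\ref{rossi} requires $L\unlhd G$ to be a $\pi'$-group, so it cannot be invoked. Your remark about replacing $L$ by ``$L'$-type subgroups'' is not an argument: it changes the sets of characters being counted, and nothing relates the new counts to the old ones.

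The paper does this reduction as follows.
\begin{enumerate}
\item Since $\Irrs{G|\lambda}\neq\emptyset$, Lemma~\ref{extpprime}, applied to each Sylow $p_i$-subgroup of $Z$ with $p_i\in\sigma$, shows that each $\lambda_{p_i}$ extends to $G$.
\item Taking $p_i$-parts of these extensions gives a linear $\alpha\in\Irr G$ extending $\lambda_\sigma$.
\item Multiplication by $\alpha^{-1}$ is a degree-preserving character triple isomorphism $(G,Z,\lambda)\to(G,Z,\lambda_{\sigma'})$, compatible with every $Z\le U\le G$ and in particular with $NZ$.
\item Factoring out $\ker{\lambda_{\sigma'}}$ finishes.
\end{enumerate}
This is exactly where the nonemptiness hypothesis is used, and your proposal never uses it. Your device in Case 2, taking $W=Z_\sigma$ with the $G$-invariant $\lambda_\sigma$ in Corollary~\ref{811nav}, is a legitimate way around the $\sigma$-part for $\sigma'$-factors. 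Lemma~\ref{rossi}, however, has no such parameter.

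\textbf{Missing reduction to $G=LN$ before the case split.} This reduction is valid for every chief factor. For each $H$-invariant $\theta\in\Irrs{L|\lambda}$, induction on $|G:Z|$ with $(L,\theta)$ in place of $(Z,\lambda)$ gives $|\Irrs{G|\theta}|=|\Irrs{LN|\theta}|$. Lemma~\ref{93nav}, used in $G$ and in $LN$ with a set of $N$-orbit representatives, sums these to $|\Irrs{G|\lambda}|=|\Irrs{LN|\lambda}|$. So one may assume $G=LN$, and hence $LH\unlhd G$.
\begin{itemize}
\item In Case 2, this normality is the hypothesis $KH\unlhd G$ of Corollary~\ref{811nav}, which you assert without justification.
\item It is also what closes Case 1: $LH/Z$ is a normal $\sigma$-subgroup containing the Hall subgroup $HZ/Z$, so $HZ\unlhd G$, and the Frattini argument with Theorem~\ref{su} gives $G=NZ$. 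Your claim that $G/Z$ is then a $\sigma$-group is false in general, and Lemma~\ref{extpprime} plays no role there.
\end{itemize}

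\textbf{The Case 3 step you call the main obstacle is routine.} Let $\mathcal C$ be a set of representatives of the $N$-orbits of $H$-invariant members of $\Irrs{L|\lambda}$. By equivariance and part b), $\mathcal C'$ is such a set for $\Irrs{M|\lambda}$. Then Lemma~\ref{93nav} (in $G$ and in $M\N GQ$), the Clifford correspondence, part a), and $(M\N GQ)_{\theta'}=M\N GQ_\theta$ give $|\Irrs{G|\lambda}|=|\Irrs{M\N GQ|\lambda}|$. Your $G$-orbit version also works, because $G=L\N GQ$ forces $|G:G_\theta|=|\N GQ:\N GQ_\theta|=|M\N GQ:(M\N GQ)_{\theta'}|$.
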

\begin{proof}
    We proceed by induction on $\card{G:Z}$. Let $N=\N G H$. As $H\subseteq G_\lambda$ then by the Clifford correspondence we have $\card{\Irra{\sigma'}{G|\lambda}}=\card{\Irra{\sigma'}{G_\lambda|\lambda}}$, and since $(NZ)_\lambda=\N{G_\lambda}HZ$ then by the Clifford correspondence we also have $\card{\Irra{\sigma'}{\N G HZ|\lambda}}=\card{\Irra{\sigma'}{\N{G_\lambda}HZ|\lambda}}$. Therefore we may assume that $\lambda$ is $G$-invariant. As a consequence there is a character triple $(G^*,Z^*,\lambda^*)$ isomorphic to $(G,Z,\lambda)$ with $Z^*\subseteq\textbf{Z}(G^*)$ (see for instance Corollary (5.9) of \cite{N18}). If $Z\leq H\leq G$ then we denote $H^*$ as the only subgroup $Z^*\leq H^*\leq G^*$ such that $(H/Z)^*=H^*/Z^*$, where $^*:G/Z\to G^*/Z^*$ is the isomorphism associated to the character triple isomorphism. Notice that $(HZ)^*/Z^*$ is a Hall $\sigma$-subgroup of $G^*/Z^*$. Now we see that there is a unique Hall $\sigma$-subgroup $H^*$ of $G^*$ such that $(HZ)^*=H^*Z^*$. If there are two such subgroups, say $A, B$, then they are Hall $\sigma$-subgroups of $AZ^*$. By Theorem \eqref{su} we have that $A$ is $AZ^*$-conjugate to $B$, and since $Z^*$ is central then necessarily $A=B$. Therefore, $\N{G^*}{H^*}=\N{G^*}{H^*Z^*}$. Using again Theorem \eqref{su} it easily follows that $\N{G/Z}{HZ/Z}=NZ/Z$, and hence
    \[(NZ)^*/Z^*=(NZ/Z)^*={(\N{G/Z}{HZ/Z})^*=\N{G^*/Z^*}{H^*Z^*/Z^*}=\N{G^*}{H^*}/Z^*}.\]
    By the definition of character triple isomorphisms, and using that $\lambda(1)$ is a $\sigma'$-number we deduce that $\card{\Irra{\sigma'}{G|\lambda}}=\card{\Irra{\sigma'}{G^*|\lambda^*}}$ and $\card{\Irra{\sigma'}{NZ|\lambda}}=\card{\Irra{\sigma'}{\N{G^*}{H^*}|\lambda^*}}$. As a consequence, we may assume that $Z\subseteq\textbf{Z}(G)$. We may also assume that $Z$ is a $\sigma'$-group, as we prove next. Write $\sigma=\{p_1,\ldots,p_n\}$ for distinct primes $p_i$ and write $\lambda=\lambda_\sigma\lambda_{\sigma'}$ for its $\sigma$-part and its $\sigma'$-part. Also, write $\lambda_\sigma=\prod_{i=1}^n\lambda_{p_i}$, where each $\lambda_{p_i}\in\Irr Z$ has $p_i$-order. Fix $1\leq i\leq n$ and let $P\in\Syl{p_i}{G}$. View $\lambda_{p_i}\in\Irr{Z_i}$, where $Z_i\in\Syl{p_i}Z$. As $\card{\Irra{\sigma'}{G|\lambda}}\not=0$, we have that $\card{\Irra{{p_i}'}{G|\lambda_{p_i}}}\not=0$, and hence by Lemma \eqref{extpprime} we have that $\lambda_{p_i}\in\Irr{Z_i}$ extends to some $\alpha_i\in\Irr G$. Taking the $p_i$-part of that extension we may assume that $\alpha_i$ has $p_i$-order, and hence by the unicity of the canonical extension (see for instance Corollary (6.2) of \cite{N18}) we obtain that $\lambda_{p_i}\in\Irr Z$ extends to $G$. Hence $\lambda_\sigma$ extends to some $\alpha\in\Irr G$. 
    
    Write $\beta=\inv\alpha$, and for each $Z\leq U\leq G$ consider the map $^*:\Irr{U|\lambda}\to\Irr{U|\lambda_{\sigma'}}$ defined via $\chi\mapsto\beta_U\chi$. Notice that this map with the trivial isomorphism $^*:G/Z\to G/Z$ define a character triple isomorphism between $(G,Z,\lambda)$ and $(G,Z,\lambda_{\sigma'})$. Now, since $\lambda_{\sigma'}$ is $G$-invariant then $\ker{\lambda_{\sigma'}}\unlhd G$ and hence the character triple $(G,Z,\lambda_{\sigma'})$ is isomorphic to $(G/\ker{\lambda_{\sigma'}},Z/\ker{\lambda_{\sigma'}},\lambda_{\sigma'})$. As $\card{Z/\ker{\lambda_{\sigma'}}}$ is the order of $\lambda_{\sigma'}$, by the previous reasoning we may also assume that $Z$ is a $\sigma'$-group.

    Now, let $L/Z$ be a chief factor of $G$. If $\mathcal A$ is a complete set of representatives of the orbits of the action of $N$ on the $H$-invariant characters of $\Irra{\sigma'}{L|\lambda}$, then by Lemma \eqref{93nav} we have $\Irra{\sigma'}{G|\lambda}=\bigcup_{\theta\in\mathcal A}\Irra{\sigma'}{G|\theta}$ and $\Irra{\sigma'}{LN|\lambda}=\bigcup_{\theta\in\mathcal A}\Irra{\sigma'}{LN|\theta}$ are disjoint unions. Hence, as $\card{G:L}<\card{G:Z}$, then by induction it follows that
    $\card{\Irra{\sigma'}{G|\lambda}}=\card{\Irra{\sigma'}{LN|\lambda}}$.
    If $LN<G$ then by induction the result follows, and therefore we assume that $LN=G$. Therefore $LH\unlhd G$.

    If $L/Z$ is a $\pi$-group or a $p$-group then $LH/Z$ is a normal Hall $\sigma$-subgroup of $G/Z$. Thus $LH=ZH$, and since $H\unlhd ZH$ then $H\unlhd G$. Hence we may assume that $L/Z$ is a $\pi'$-group which is not a $p$-group.

    Suppose that $L/Z$ is a $\sigma'$-group. As $Z$ is a $\sigma'$-group then so is $L$. Denote by $':\Irra H{L}\to\Irr{\C L H}$ the Glauberman-Isaacs correspondence. For each $\chi\in\Irra{H}L$ it holds that $\chi'$ is a constituent of $\chi_{\C L H}$ (for $H$ solvable, see Theorem (2.9) of \cite{N18}, and for $\card L$ odd, see Theorem (8.19) of \cite{Is18}), and hence the map $':\Irra H{L|\lambda}\to\Irr{\C L H|\lambda}$ is also a bijection. Now, let $\mathcal B$ be a complete set of representatives of the orbits of the action of $N$ on the $H$-invariant characters of $\Irr{L|\lambda}$. Since the Glauberman-Isaacs correspondence commutes with the $N$-action then $\mathcal B'=\{\theta':\theta\in\Irra{H}{L}\}$ is a complete set of representatives of the orbits of the action of $N$ on $\Irr{\C H L|\lambda}$. Now, combining Lemma \eqref{93nav} and Corollary \eqref{811nav} (applied with $W=1$) we obtain
    \[\card{\Irra{\sigma'}{G|\lambda}}=\sum_{\theta\in\mathcal B}\card{\Irra{\sigma'}{G|\theta}}=\sum_{\theta\in\mathcal B}\card{\Irra{\sigma'}{N|\theta'}}=\card{\Irra{\sigma'}{N|\lambda}}.\]

    Hence we may assume that $L/Z$ is a nonabelian $\pi'$-group and that $p$ divides $\card{L/Z}$. Since $L/Z$ is a nonabelian chief factor and $Z$ is central, then $Z=\Z L$. Let $Q=H\cap L\in\Syl p L$ and notice that $Q$ can't be normal in $L$ since $L/\Z L$ is not a $p$-group. Hence, by Lemma \eqref{rossi} there is an $\N G Q$-invariant subgroup $\N L Q\subseteq M\subseteq L$, with $M<L$, and an $\N G Q$-equivariant bijection $':\Irra{\sigma'}L\to\Irra{\sigma'}{M}$ satisfying the properties of the lemma.

    Let $\mathcal C$ be a complete
set of representatives of the orbits of the action of $N$ on the $H$-invariant characters of $\Irra{\sigma'}{L|\lambda}$. Since the bijection is $\N G Q$-equivariant and $H\subseteq\N G Q$, using part b) of Lemma \eqref{rossi} it follows that $\mathcal C'=\{\theta':\theta\in\mathcal C\}$ a complete
set of representatives of the orbits of the action of $N$ on the $H$-invariant characters of $\Irra{\sigma'}{M|\lambda}$. Since $M\N G Q_\theta=(M\N G Q)_{\theta'}$, using now Lemma \eqref{93nav}, the Clifford correspondence and part a) of Lemma \eqref{rossi} it follows that
\[\begin{aligned}
    \card{\Irra{\sigma'}{G|\lambda}}&=\sum_{\theta\in\mathcal C}\card{\Irra{\sigma'}{G|\theta}}=\sum_{\theta\in\mathcal C}\card{\Irra{\sigma'}{G_\theta|\theta}}\\
    &=\sum_{\theta\in\mathcal C}\card{\Irra{\sigma'}{(M\N G Q)_{\theta'}|\theta'}}=\sum_{\theta\in\mathcal C}\card{\Irra{\sigma'}{M\N G Q|\theta'}}\\
    &=\card{\Irra{\sigma'}{M\N G Q|\lambda}}.
\end{aligned}\]
As $\N G H\subseteq M\N GQ $, then we are done by induction if $M\N GQ<G$. However, if $M\N G Q=G$, then by part c) of Lemma \eqref{rossi} it follows that $L=M$, which is a contradiction.
\end{proof}

\end{document}